\documentclass[leqno]{amsart}
\usepackage[a4paper, margin=3cm]{geometry}
\usepackage{amsthm}
\usepackage{amsmath,amssymb,mathtools,mathdots}
\usepackage{mathrsfs}
\usepackage{url}
\usepackage[all]{xy}
\usepackage{comment}
\newtheorem{theorem}{Theorem}[section]
\newtheorem*{theorem*}{Theorem}
\newtheorem{lemma}[theorem]{Lemma}
\newtheorem*{lemma*}{Lemma}

\newtheorem*{proposition*}{Proposition}

\newtheorem*{corollary*}{Corollary}

\newtheorem*{claim*}{Claim}

\newtheorem*{fact*}{Fact}

\newtheorem*{conjecture*}{Conjecture}
\theoremstyle{definition}
\newtheorem{definition}[theorem]{Definition}
\newtheorem*{definition*}{Definition}

\newtheorem*{example*}{Example}
\newtheorem{remark}[theorem]{Remark}
\newtheorem*{remark*}{Remark}

\newtheorem*{question*}{Question}

\newtheorem*{assumption*}{Assumption}
\numberwithin{equation}{section}
\allowdisplaybreaks[1]

\newcommand{\N}{{\mathbb N}}

\address{Mathematics Institute and DIMAP, University of Warwick, Coventry CV4 7AL, UK.}
\address{Graduate School of Mathematical Sciences, the University of Tokyo, 3-8-1 Komaba, Meguro, Tokyo 153-8914, Japan.}
\begin{document}
\title{On the spectral gap conjecture for pairs in $\mathrm{SU(2)}$}
\author{Oleg Pikhurko and Kohki Sakamoto}
\date{}

\begin{abstract}
For $n \ge 2$, Gamburd, Jakobson, and Sarnak [\textit{J. Eur. Math. Soc.} 1, 51–85 (1999)] conjectured that almost every $n$-tuple in $\mathrm{SU}(2)$ has a spectral gap. Toward this conjecture, Fisher [\textit{Int. Math. Res. Not.} (2006)] established a zero-one law for $n \ge 3$, but obtained only a partial result for $n=2$. In this paper, we prove that the zero-one law also holds for $n=2$. We also remark that a Baire categorical analogue of this result holds.
\end{abstract}
\maketitle

\section{Introduction}
Let $n \ge 2$ and consider $n$-tuples in the special unitary group $\mathrm{SU}(2)$. The \textit{spectral gap conjecture} of Gamburd, Jakobson, and Sarnak \cite{MR1677685} (see also Lubotzky, Phillips, and Sarnak \cite{MR861487}) asserts that almost every $n$-tuple (with respect to the Haar measure) generates a subgroup whose natural action on $L^2(\mathrm{SU}(2))$ has a spectral gap. \\
\indent The study of spectral gaps for tuples in $\mathrm{SU}(2)$ originates from the Banach–Ruziewicz problem, which asks for uniqueness of rotationally invariant finitely additive measures on the sphere $S^{2}$. Rosenblatt \cite{MR610970} showed that this question is equivalent to finding finite subsets of the rotation group with a spectral gap. Then, Drinfeld \cite{MR757256} constructed such subsets using heavy machinery
from the theory of automorphic forms. See also Margulis \cite{MR596890} and
Sullivan \cite{MR590825} for results in higher dimensions. Further, Gamburd, Jakobson, and Sarnak \cite{MR1677685} constructed $n$-tuples in $\mathrm{SU}(2)$ with a spectral gap based on just elementary arguments using quaternions. In their influential paper, Bourgain and Gamburd \cite{MR2358056} extended this to $n$-tuples with algebraic entries, but the set of such tuples only has zero measure. \\
\indent As the first step to the conjecture for generic tuples, Fisher \cite{MR2250018} proved a zero-one law for spectral gap for $n \ge 3$, namely, that the set of $n$-tuples with spectral gap has zero measure or full measure. His proof is based on the ergodicity of the action of $\mathrm{Out}(F_{n})$, the outer automorphism group of the free group on $n$ generators, on the character variety $\mathrm{Hom}(F_{n}, \mathrm{SU(2)}) / \mathrm{SU}(2)$, which was established by Goldman in \cite{MR2346275}. However, when $n=2$, it is known that the action is no longer ergodic, and indeed has many ergodic components described by the trace of the commutator as shown by Pickrell and Xia \cite{MR1915045}. Therefore, Fisher stated the corresponding partial result for $n=2$ (see Theorem \ref{Fisher2}).\\
\indent In this paper, we point out that the zero-one law also holds for $n=2$. Let $\mathbf{SG}_{2} \subset \mathrm{SU}(2)^{2}
$ denote the set of pairs having spectral gap.
\begin{theorem}\label{mainthm}
    The set of pairs with spectral gap $\mathbf{SG}_{2} \subset \mathrm{SU}(2)^{2}$ has either zero measure or full measure with respect to the Haar measure. 
\end{theorem}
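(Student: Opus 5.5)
The plan is to reduce the statement to a dichotomy for a set of parameters, using two things: the Pickrell--Xia description of the ergodic components of the $\mathrm{Out}(F_2)$-action (these are the level sets of $\operatorname{tr}[a,b]$), and a monotonicity property of spectral gap under passing to subgroups.

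\textbf{Step 1 (invariance and monotonicity).} Whether a pair has spectral gap depends only on the subgroup $\langle a,b\rangle$, since two finite symmetric generating sets of the same group give equivalent averaging operators. Hence $\mathbf{SG}_2$ is invariant under conjugation and under $\mathrm{Aut}(F_2)$. It also satisfies a monotonicity property: if $\langle a',b'\rangle\subset\langle a,b\rangle$ and $(a',b')\in\mathbf{SG}_2$, then $(a,b)\in\mathbf{SG}_2$. To see this, note that $a',b'$ are words of bounded length in $a,b$. An almost invariant vector for $\{a,b\}$ is therefore almost invariant for $\{a',b'\}$, so it lies close to the constants.

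\textbf{Step 2 (reduction to a set of traces).} By Theorem~\ref{Fisher2} and Pickrell--Xia, after discarding null sets there is a measurable set $T\subset[-2,2]$ such that
\[
\mathbf{SG}_2=\{(a,b):\operatorname{tr}[a,b]\in T\}.
\]
It suffices to show that $T$ is null or conull for the pushforward of Haar measure under $\operatorname{tr}[a,b]$. I will parametrize a generic pair by the rotation half-angles $\alpha,\beta\in(0,\pi)$ of $a,b$ and the angle $\varphi$ between their axes. A standard quaternion computation gives
\[
\operatorname{tr}[a,b]=2-4\sin^2\alpha\,\sin^2\beta\,\sin^2\varphi .
\]
Write $u=\sin^2\alpha\sin^2\varphi$ and $T'=\{(2-t)/4: t\in T\}$. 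Haar measure in these coordinates is equivalent to Lebesgue measure.

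\textbf{Step 3 (power maps).} Consider $F_k(a,b)=(a,b^k)$. It preserves the axes of $a$ and $b$ and $\alpha$, and it sends $\beta\mapsto k\beta \pmod \pi$. By Step 1, $F_k^{-1}(\mathbf{SG}_2)\subset\mathbf{SG}_2$. Since $F_k$ is smooth and generically finite-to-one, this inclusion holds modulo null sets. In coordinates, for a.e.\ $u\in(0,1)$ set
\[
A_u=\{\beta\in\mathbb R/\pi\mathbb Z:\ u\sin^2\beta\in T'\}.
\]
Fubini then gives $m_k^{-1}(A_u)\subset A_u$ a.e., where $m_k(\beta)=k\beta$. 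Since $m_k$ preserves Lebesgue measure on the circle, $m_k^{-1}(A_u)$ and $A_u$ have equal measure. The inclusion is therefore an equality a.e., so $A_u$ is $m_2$-invariant. Ergodicity of the doubling map then forces $A_u$ to be null or conull.

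\textbf{Step 4 (conclusion).} For fixed $u$, the distribution of $u\sin^2\beta$ is equivalent to Lebesgue measure on $[0,u]$. So for a.e.\ $u$, the set $T'\cap[0,u]$ is null or conull in $[0,u]$. The set of $u$ for which it is conull is an interval $(0,u_0)$ up to null sets, by monotonicity in $u$. Take $u_0>0$; then $T'\cap[0,u]$ has positive measure for small $u$, hence is conull for every $u<1$. So $T'$ is null or conull, and therefore $\mathbf{SG}_2$ is null or conull.

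\textbf{Main obstacle.} I expect the hard part to be the measure-theoretic bookkeeping in Steps 2--3. Concretely, one must make the ergodic decomposition for $\mathrm{Out}(F_2)$ (known only up to null sets) compatible with the map $F_k$, which is not measure-preserving on $\mathrm{SU}(2)^2$ but only non-singular. One must also get the half-angle conventions right, so that $b\mapsto b^k$ really acts as $\beta\mapsto k\beta$ on the variable appearing in the trace formula. I would first verify that $F_k$ pushes Haar measure to an equivalent measure, and then apply Fubini in the $(u,\beta)$ coordinates.
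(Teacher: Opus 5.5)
Your proof is correct, but it takes a genuinely different route from the paper's in the final step.

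\textbf{What is shared.} Both arguments rest on the same two inputs: Fisher's fiberwise dichotomy (Theorem \ref{Fisher2}), and the one-sided stability of the non-spectral-gap locus under a squaring word map (Lemma \ref{stable}). You use $(a,b)\mapsto(a,b^2)$; the paper uses $(a,b)\mapsto(a^2,b)$.

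\textbf{The paper's route.} It works in the coordinates $(\operatorname{tr}a,\operatorname{tr}[a,b])$ on the region $D$. There the squaring map becomes $\phi(x,t)=(x^2-2,\,x^2(t-2)+2)$, which does not preserve measure. The paper shows that the fiber over $t$ is pushed onto all fibers over $[t^2-2,2]$. It then iterates $t\mapsto t^2-2$ to sweep out everything, and handles null sets with a Lebesgue density-point argument.

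\textbf{Your route.} You choose coordinates $(u,\beta)$ with $(2-\operatorname{tr}[a,b])/4=u\sin^2\beta$. The identity $\operatorname{tr}[a,b]=2-4\sin^2\alpha\sin^2\beta\sin^2\varphi$ does follow from the Fricke identity. In these coordinates the power map is $(u,\beta)\mapsto(u,k\beta)$: the identity in $u$ times a measure-preserving, ergodic endomorphism of $\mathbb R/\pi\mathbb Z$. This gains you two things.
\begin{itemize}
\item Because the map preserves measure, the one-sided inclusion $m_2^{-1}(A_u)\subset A_u$ coming from spectral-gap monotonicity upgrades to genuine invariance for free.
\item Ergodicity of the doubling map then makes $T\cap[2-4u,2]$ null or conull for a.e.\ $u$. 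The nested-interval monotonicity in $u$ replaces the paper's iteration and density-point bookkeeping.
\end{itemize}

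\textbf{What each route needs.} You must know that $\alpha,\beta,\varphi$ are independent under Haar measure, with densities positive on $(0,\pi)$. This makes the law of $(u,\beta)$ equivalent to Lebesgue measure on a product, so all the transfers are plain Fubini. The paper instead needs the description $\Pi(K^2)=D$ and the equivalence of the pushforward measure with Lebesgue measure there.

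\textbf{One point of precision.} In Step 3, what you need for $F_k$ is non-singularity: preimages of null sets must be null, so that replacing $\mathbf{SG}_2$ by $\{\operatorname{tr}[a,b]\in T\}$ survives pulling back. This holds because the Jacobian of $b\mapsto b^k$ vanishes only on a null set. Being ``generically finite-to-one'' is not by itself the right justification.
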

Our argument is elementary and relies on the observation that the set of non-spectral-gap pairs is stable under certain word maps, allowing the locus to move across ergodic components. Further, we also remark that a Baire categorical analogue of the zero-one law holds.
\begin{theorem}\label{baire}
    The set of pairs with spectral gap $\mathbf{SG}_{2} \subset \mathrm{SU}(2)^{2}$ is either meager or comeager.
\end{theorem}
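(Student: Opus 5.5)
We prove Theorem \ref{baire} by the Baire-category analogue of the argument for Theorem \ref{mainthm}. The plan has three steps: show that $\mathbf{SG}_{2}$ has the Baire property, show that it is invariant under a large group of homeomorphisms, and then use a topological zero-one law for the resulting action on the character variety.

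\textbf{Step 1 (Baire property).} For $(a,b)$, let $T_{a,b} = \frac{1}{4}(\lambda(a)+\lambda(a^{-1})+\lambda(b)+\lambda(b^{-1}))$ act on $L^2_0(\mathrm{SU}(2))$. By Peter--Weyl, $(a,b)$ has a spectral gap if and only if $\sup_{k\ge 1}\|T_{a,b}|_{V_k}\| < 1$, where $V_k$ is the $k$-th isotypic component; this norm is the operator norm of $\frac14\sum \pi_k(\cdot)$ on the finite-dimensional representation $\pi_k$. Each map $(a,b)\mapsto \|T_{a,b}|_{V_k}\|$ is continuous. Hence
\[\mathbf{SG}_{2}=\bigcup_{m}\bigcap_{k}\{(a,b): \|T_{a,b}|_{V_k}\|\le 1-1/m\}\]
is $F_\sigma$, and in particular Borel with the Baire property.

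\textbf{Step 2 (invariance).} $\mathbf{SG}_{2}$ is invariant under simultaneous conjugation. It is also invariant under $\mathrm{Aut}(F_2)$ acting by Nielsen moves $(a,b)\mapsto (a,ab)$, $(a,b)\mapsto(b,a)$, $(a,b)\mapsto(a^{-1},b)$. The reason is that the spectral gap property depends only on the generated subgroup $\langle a,b\rangle$, since a change of finite generating set changes the norm bounds only by constants. These maps are homeomorphisms of $\mathrm{SU}(2)^2$. By the paper's word-map observation, the complement is further stable under maps such as $(a,b)\mapsto(a,w(a,b))$ for suitable words, because $\langle a,w\rangle\subseteq\langle a,b\rangle$ and a subgroup of a non-gap group has no gap. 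Such maps are open, real-analytic and dominant, so preimages and images of meager sets stay meager.

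\textbf{Step 3 (topological zero-one law).} We use the fact that $\mathrm{Aut}(F_2)$ acts on $\mathrm{SU}(2)^2$ with a dense orbit, which follows from the results of Goldman and of Pickrell--Xia. Then the standard topological zero-one law applies: an invariant set with the Baire property is meager or comeager. Suppose $\mathbf{SG}_2$ is nonmeager. Then it is comeager in some open set $U$. The translates $gU$ for $g\in\mathrm{Aut}(F_2)$ cover a dense open set, so $\mathbf{SG}_2$ is comeager.

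The main obstacle is Step 3, since for $n=2$ the $\mathrm{Aut}(F_2)$-orbits preserve $\operatorname{tr}[a,b]$ and so are not dense. The fallback is to mimic the measure argument:
\begin{enumerate}
\item Apply the Kuratowski--Ulam theorem along the fibration by $\operatorname{tr}[a,b]$. On each level set, Pickrell--Xia give minimality/ergodicity of the action for generic levels, so $\mathbf{SG}_2$ is generically fibrewise meager or comeager.
\item The set of levels $t$ where it is comeager has the Baire property in $[-2,2]$.
\item Use the word maps from Step 2 to move levels. The trace of the commutator of $(a,w(a,b))$ varies continuously and nonconstantly with $t$. The non-gap set is carried into itself, which forces the set of "comeager" levels to be either meager or comeager in $[-2,2]$.
\end{enumerate}
Checking that these word maps are open on fibres, so that they transport category correctly, is the delicate point.
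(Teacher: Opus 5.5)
There is a genuine gap in Step 3, and the fallback does not close it. As you note yourself, $\mathrm{Aut}(F_2)$ preserves $\operatorname{tr}[a,b]$, so every orbit lies in a closed nowhere dense level set. There is therefore no dense orbit, and Step 3 as written cannot work.

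The fallback would require three things you do not supply:
\begin{itemize}
\item a \emph{topological} transitivity statement on generic level sets, whereas the Pickrell--Xia input used by Fisher is a measure-theoretic ergodicity result;
\item Kuratowski--Ulam along the non-product, singular fibration by $\operatorname{tr}[a,b]$;
\item a category analogue of Lemma \ref{planezeroone}.
\end{itemize}
The third item is exactly the step you leave open, and it cannot be done level by level. The map $\Phi(a,b)=(a^2,b)$ sends a level set $g^{-1}(t)$ to a hypersurface that meets each other level set $g^{-1}(r)$ only in a lower-dimensional, hence nowhere dense, subset. So you would have to transport comeagerness through open bands of levels, and none of this is carried out.

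A minor point in Step 1: the definition of spectral gap only controls the top of the spectrum of $T_{a,b}$. The criterion should therefore use the largest eigenvalue of $T_{a,b}|_{V_k}$ rather than its norm. The $F_\sigma$ conclusion is unaffected.

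The missing idea is that no dynamics on the character variety is needed, and you already had both ingredients. Step 1 shows that $K^2\setminus\mathbf{SG}_2$ is $G_\delta$. Lemma \ref{stable} shows that if $(a,b)\notin\mathbf{SG}_2$, then $\Gamma\times\Gamma\subseteq K^2\setminus\mathbf{SG}_2$, where $\Gamma=\langle a,b\rangle$. This ``orbit'' under all word maps, unlike an $\mathrm{Aut}(F_2)$-orbit, is dense in $K^2$ as soon as $\Gamma$ is dense in $K$.

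So split into two cases, as the paper does:
\begin{itemize}
\item If some pair without spectral gap generates a dense subgroup, the complement of $\mathbf{SG}_2$ is a dense $G_\delta$, hence comeager, so $\mathbf{SG}_2$ is meager.
\item Otherwise $\mathbf{SG}_2$ contains the comeager set of pairs generating a dense subgroup of $K$, so it is comeager.
\end{itemize}
Since only the containment of a dense set is used, the non-invertibility and openness of the word maps never enter.
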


\subsection*{Acknowledgements}
We would like to thank Andr\'as M\'ath\'e for his helpful discussions. OP is supported by ERC Advanced Grant 101020255. KS is supported by JSPS KAKENHI Grant number 25KJ0862 and FoPM, WINGS Program, the University of Tokyo. KS also thanks Mathematics Institute at University of Warwick for its hospitality during his visit, where this work was carried out.

\section{Preliminaries}
\indent Let $K$ denote the special unitary group $\mathrm{SU}(2)$. First, we recall the definition of spectral gap for $n$-tuples in $K$. The definition presented here is not in the same form as in \cite{MR1677685}, but it can be easily checked that they are actually equivalent (see \cite{MR2250018} for the detailed explanation). 
\begin{definition}
 An $n$-tuple $(g_{1},\ldots,g_{n}) \in K^{n}$ is said to \textit{have spectral gap} if the left regular representation $\lambda$ of $\Gamma = \langle g_{1},\ldots,g_{n} \rangle$ on $L^{2}(K)$ has spectral gap, i.e., there exists $\varepsilon >0$ such that for every element $v \in L_{0}^{2}(K)$ there exists $g_{k}$ that satisfies $\left\lVert \lambda(g_{k})v-v \right\rVert \ge \varepsilon \left\lVert v \right\rVert$. Let $\mathbf{SG}_{n}$ denote the set of $n$-tuples having spectral gap.
\end{definition}
It is easy to see that the set $\mathbf{SG}_{n}$ is Borel. As explained in \cite{MR2250018}, this property actually depends only on the generated subgroup, and it is the starting point of his argument. Namely, it means that the property is invariant under the natural action of the automorphism group $\mathrm{Aut}(F_{n})$ on $K^{n}$, and of course it is also stable under the conjugation, and hence the image of $\mathbf{SG}_{n}$ in the character variety $\mathrm{Hom}(F_{n}, K)/K$ is invariant under the natural action by $\mathrm{Out}(F_{n})$. Since this action is ergodic for $n\ge3$ (\cite{MR2346275}), the zero-one law follows. This is a brief summary of \cite{MR2250018}, and he also obtained a partial result for $n =2$ based on the explicit description of the ergodic components (\cite{MR1915045}). Let us recall his result here.

\begin{theorem}[Fisher, Theorem 2.3 in \cite{MR2250018}]\label{Fisher2}
    Let $g \colon K^{2} \to [-2,2]$ denote the function defined by $g(a,b) = \mathrm{tr}([a,b])$, the trace of the commutator $aba^{-1}b^{-1}$. Then, for almost every $t \in [-2,2]$, the set $g^{-1}(t) \cap \mathbf{SG}_{2}$ is null or conull with respect to the disintegrated measure on the fiber $g^{-1}(t)$.
\end{theorem}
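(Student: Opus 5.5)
The plan is to reduce Theorem \ref{Fisher2} to the ergodicity result of Pickrell and Xia \cite{MR1915045}. The ingredients are the invariance of $\mathbf{SG}_{2}$ under $\mathrm{Aut}(F_{2})$ and under conjugation, the invariance of $g$ under both actions, and uniqueness of disintegration.

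\textbf{Step 1: invariance of $g$.} I first check that $g$ is invariant under $\mathrm{Aut}(F_{2})$ acting on $K^{2}$ by precomposition. It suffices to check the Nielsen generators $(a,b)\mapsto(b,a)$, $(a^{-1},b)$ and $(ab,b)$. For each one, the new commutator is conjugate to $[a,b]$ or to $[a,b]^{-1}$. In $\mathrm{SU}(2)$ an element and its inverse have the same trace, so $g$ is unchanged. Clearly $g$ is also invariant under simultaneous conjugation by $K$.

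\textbf{Step 2: invariance of the disintegrated measures.} Let $\mu$ be Haar measure on $K^{2}$. Write $\mu=\int \mu_{t}\,d\nu(t)$ for its disintegration over $g$, where $\nu=g_{*}\mu$ on $[-2,2]$. The action of $\mathrm{Aut}(F_{2})$ on $K^{2}$ is measure-preserving: the Nielsen moves preserve Haar measure because $(a,b)\mapsto(ab,b)$ is a right translation in the first coordinate. Conjugation also preserves $\mu$. Each $\gamma\in\mathrm{Aut}(F_{2})$ preserves the fibres of $g$, so $\int \gamma_{*}\mu_{t}\,d\nu(t)$ is again a disintegration of $\mu$ over $g$. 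By uniqueness of disintegration, $\gamma_{*}\mu_{t}=\mu_{t}$ for $\nu$-a.e.\ $t$. Since $\mathrm{Aut}(F_{2})$ is countable, there is a single $\nu$-conull set $T\subset[-2,2]$ such that for every $t\in T$ the measure $\mu_{t}$ is invariant under all of $\mathrm{Aut}(F_{2})$. The same argument applied to a countable dense subgroup of $K$, together with continuity of the conjugation action, gives invariance under conjugation as well.

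\textbf{Step 3: passing to the relative character variety.} For each fibre, push $\mu_{t}$ forward to the quotient $g^{-1}(t)/K$, which is the relative character variety of the once-punctured torus with boundary trace $t$. The pushed-forward measure $\bar\mu_{t}$ is $\mathrm{Out}(F_{2})$-invariant. The key point is that, for a.e.\ $t$, $\bar\mu_{t}$ is equivalent to the Goldman symplectic measure on this level set. I would argue this as follows. The pushforward of $\mu$ to $\mathrm{Hom}(F_{2},K)/K$ has a smooth positive density with respect to Goldman's symplectic measure on the smooth part, by Goldman's comparison. Disintegrating both measures over the Casimir $g$, the two families of fibre measures then differ by positive densities for a.e.\ $t$. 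Pickrell and Xia \cite{MR1915045} prove that $\mathrm{Out}(F_{2})$ acts ergodically on a.e.\ such level set with respect to the symplectic measure. Ergodicity passes to any equivalent quasi-invariant measure, so $\bar\mu_{t}$ is ergodic for $\nu$-a.e.\ $t$.

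\textbf{Step 4: conclusion.} The set $\mathbf{SG}_{2}$ is Borel and depends only on the generated subgroup. Hence it is $\mathrm{Aut}(F_{2})$-invariant and a union of $K$-orbits, so $\mathbf{SG}_{2}\cap g^{-1}(t)$ is the preimage of an $\mathrm{Out}(F_{2})$-invariant Borel set $E_{t}\subset g^{-1}(t)/K$. This gives $\mu_{t}(\mathbf{SG}_{2})=\bar\mu_{t}(E_{t})$, which is $0$ or $1$ by ergodicity, for a.e.\ $t$.

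\textbf{Main obstacle.} I expect the hardest step to be the measure-theoretic identification in Step 3. It requires that the disintegrated Haar measures on the fibres agree, up to equivalence and for a.e.\ $t$, with the measures for which Pickrell--Xia prove ergodicity. This needs care at the singular loci: the reducible representations and the endpoints $t=\pm2$. These should form a $\nu$-null set of parameters together with $\mu_{t}$-null sets inside each fibre, and I would handle them by restricting to the smooth locus before disintegrating.
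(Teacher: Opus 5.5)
Your proposal is correct and follows the route the paper attributes to Fisher. Since $\mathbf{SG}_2$ depends only on the generated subgroup, it is $\mathrm{Aut}(F_2)$- and conjugation-invariant and descends to an $\mathrm{Out}(F_2)$-invariant set in the character variety; the Pickrell--Xia description of the ergodic components as level sets of $\mathrm{tr}([a,b])$ then gives the fibrewise null/conull dichotomy. The paper only cites Fisher here, so your disintegration and measure-class bookkeeping fills in details it leaves implicit. Your Step 3 ``obstacle'' can be settled concretely: Haar measure pushes forward under $(a,b)\mapsto(\mathrm{tr}(a),\mathrm{tr}(b),\mathrm{tr}(ab))$ to a constant multiple of Lebesgue measure on $\Omega$, and disintegrating that over the commutator trace gives the Goldman area measures on the level sets.
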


Therefore, to establish the zero-one law for $n=2$, we need some additional maps such that preserve the spectral gap property and mix fibers of the function $g$. As we will see later, this can be done in a surprisingly simple way. The key is the following triviality.

\begin{lemma}\label{stable}
    If $(a,b) \notin \mathbf{SG}_{2}$, then $(x,y) \notin \mathbf{SG}_{2}$ for every $x,y \in \langle a,b \rangle$.
\end{lemma}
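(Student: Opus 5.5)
We argue by contraposition: assuming $(x,y)\in\mathbf{SG}_{2}$ for some $x,y\in\langle a,b\rangle$, we show $(a,b)\in\mathbf{SG}_{2}$. The only input is the definition of spectral gap together with the triangle inequality for the unitary operators $\lambda(g)$. Write $\Gamma=\langle a,b\rangle$ and $\Lambda=\langle x,y\rangle\le\Gamma$.

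Suppose $(x,y)$ has spectral gap with constant $\varepsilon>0$: for every $v\in L_0^2(K)$ we have $\lVert\lambda(x)v-v\rVert\ge\varepsilon\lVert v\rVert$ or $\lVert\lambda(y)v-v\rVert\ge\varepsilon\lVert v\rVert$. Since $x,y\in\Gamma$, write each as a reduced word in $a^{\pm1},b^{\pm1}$, and let $N$ be the maximum of the two word lengths. For a word $w=s_1\cdots s_m$ with $s_i\in\{a^{\pm1},b^{\pm1}\}$, telescoping gives
\[
\lVert\lambda(w)v-v\rVert\le\sum_{i=1}^{m}\lVert\lambda(s_i)v-v\rVert .
\]
This uses that each $\lambda(g)$ is unitary, so $\lVert\lambda(s_1\cdots s_{i-1})(\lambda(s_i)v-v)\rVert=\lVert\lambda(s_i)v-v\rVert$. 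Unitarity also gives $\lVert\lambda(s^{-1})v-v\rVert=\lVert v-\lambda(s)v\rVert$, so inverses of generators cost the same as the generators themselves.

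Now suppose, for a contradiction, that $(a,b)$ fails spectral gap with constant $\varepsilon/N$. Then there is $v\in L_0^2(K)$, $v\neq 0$, with $\lVert\lambda(a)v-v\rVert<(\varepsilon/N)\lVert v\rVert$ and $\lVert\lambda(b)v-v\rVert<(\varepsilon/N)\lVert v\rVert$. By the telescoping bound, $\lVert\lambda(x)v-v\rVert<\varepsilon\lVert v\rVert$ and likewise for $y$, contradicting the spectral gap of $(x,y)$. Hence $(a,b)\in\mathbf{SG}_{2}$ with constant $\varepsilon/N$, which is the contrapositive of the lemma.

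The degenerate cases, such as $x=y=e$, are handled automatically: the pair $(e,e)$ never has spectral gap, because $L_0^2(K)\neq 0$. The argument is elementary, and the only point needing care is the uniform choice of $N$, which is legitimate because only the two fixed elements $x$ and $y$ are involved.
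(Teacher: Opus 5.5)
Your proof is correct and takes essentially the paper's approach: you combine the triangle inequality with unitarity of $\lambda$ (that is, invariance of the Haar measure), and your explicit telescoping bound with constant $\varepsilon/N$ does the work of the paper's induction on word length. You also handle the degenerate case $N=0$ correctly, since $(e,e)$ can never have spectral gap.
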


\begin{proof}
    It follows from the triangle inequality and the invariance of the Haar measure. For example, if $(x,y) = (a^{2},b)$ then, for every $v \in L^{2}(K)$, we have 
    \[
    \left\lVert\lambda(a^{2})v - v \right\rVert =\left\lVert\lambda(a)^{2}v - v \right\rVert \le \left\lVert\lambda(a)^{2}v - \lambda(a)v \right\rVert + \left\lVert\lambda(a)v - v \right\rVert = 2 \left\lVert\lambda(a)v - v \right\rVert,
    \]and this yields the conclusion. The general case follows by induction on the word length of $x$ and $y$ with respect to the generators $\{a, b\}$.
\end{proof}

\section{Proof of the zero-one law for $n=2$}
In this section, we establish the zero-one law for $\mathbf{SG}_{2}$ using Theorem \ref{Fisher2} and Lemma \ref{stable}. In fact, it is enough to consider the map $\Phi \colon (a,b) \mapsto (a^{2},b)$ for our purpose. First, let us observe how this map affects the value of $g$. Recall that the function $g \colon K^{2} \to [-2,2]$ is given by the trace of the commutator.

\begin{lemma}\label{trcomm}
    For $(a,b) \in K^{2}$, let $x$ and $t$ denote $\mathrm{tr}(a)$ and $\mathrm{tr}([a,b])$, respectively. Then, it holds that
    \begin{itemize}
    \item $\mathrm{tr}(a^{2})=x^{2}-2$,
    \item $\mathrm{tr}([a^{2},b]) = x^{2}(t-2)+2$.
    \end{itemize}
\end{lemma}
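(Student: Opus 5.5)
The plan is to derive both identities from the Cayley--Hamilton theorem in $\mathrm{SU}(2)$ together with the standard Fricke trace identities. For any $A \in \mathrm{SL}_2(\mathbb{C})$ we have $A^{2} = \mathrm{tr}(A)A - I$. Taking traces and using $\mathrm{tr}(I)=2$ gives $\mathrm{tr}(a^{2}) = x^{2}-2$ at once, which is the first item.

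For the second item, write $c = bab^{-1}$, so that $[a,b] = ac^{-1}$ and $[a^{2},b] = a^{2}c^{-2}$. Conjugation preserves trace, so $\mathrm{tr}(c) = x$. Recall the identity $\mathrm{tr}(AB^{-1}) = \mathrm{tr}(A)\mathrm{tr}(B) - \mathrm{tr}(AB)$ for $A,B \in \mathrm{SL}_2$; it follows from $B^{-1} = \mathrm{tr}(B)I - B$. Applying it with $A=a$, $B=c$ gives $t = x^{2} - \mathrm{tr}(ac)$, so $s := \mathrm{tr}(ac) = x^{2} - t$.

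Now expand $a^{2} = xa - I$ and $c^{-2} = x c^{-1} - I$. The latter holds because $c^{-1}$ also has trace $x$, being in $\mathrm{SU}(2)$. Multiplying out,
\[
a^{2}c^{-2} = x^{2}\, a c^{-1} - x a - x c^{-1} + I .
\]
Taking traces gives $\mathrm{tr}([a^{2},b]) = x^{2} t - x^{2} - x^{2} + 2 = x^{2}(t-2)+2$, as claimed. As a sanity check, when $t=2$, i.e. $a$ and $b$ commute, the formula correctly gives $2$.

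No step is a real obstacle. The only point needing care is keeping track of inverses: one must use that $\mathrm{tr}(c^{-1}) = \mathrm{tr}(c) = x$ in $\mathrm{SU}(2)$ (indeed in $\mathrm{SL}_2$), and that $\mathrm{tr}(I)=2$. After this expansion the variable $s$ turns out to be unnecessary, so the computation is direct.
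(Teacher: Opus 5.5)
Your proof is correct and is essentially the paper's argument: the paper likewise uses Cayley--Hamilton to expand $a^{2}=xa-I$ and $a^{-2}=xa^{-1}-I$ inside $a^{2}ba^{-2}b^{-1}$, and your $c^{-1}=ba^{-1}b^{-1}$ is exactly the conjugate term whose trace the paper evaluates as $x$. The detour through $s=\mathrm{tr}(ac)$ is, as you note yourself, unnecessary and can be dropped.
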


\begin{proof}
By the Cayley--Hamilton theorem, we have $a^{2} = xa-I$ and $a^{-2} = xa^{-1} - I$. The first formula follows immediately. Further, by $\mathrm{tr}(a) = \mathrm{tr}(a^{-1})$ we have 
\begin{align*}
    \mathrm{tr}([a^{2},b]) &= \mathrm{tr}(a^{2}ba^{-2}b^{-1}) = x^{2}\,\mathrm{tr}([a,b]) - x\,\mathrm{tr}(a) -x\,\mathrm{tr}(ba^{-1}b^{-1}) + \mathrm{tr}(I)=x^{2}(t-2) + 2
\end{align*}
as desired.
\end{proof}

What is important here is that the trace of the commutator $\mathrm{tr}([a^{2},b])$ depends only on the two parameters $(\mathrm{tr}(a), \mathrm{tr}([a,b]))$. Therefore, the map $\Phi\colon K^{2} \to K^{2}$ induces the map $\phi(x,t)=(x^{2}-2, x^{2}(t-2)+2)$ on some suitable domain in the plane. Since the complement of $\mathbf{SG}_{2}$ is stable under $\Phi$ (Lemma \ref{stable}), the problem is now reduced to the analysis of the map defined by $(x,t) \mapsto (x^{2}-2, x^{2}(t-2)+2)$. Before carrying this out, let us clarify the locus in the plane that can be realized as $(x,t)$ by some $(a,b) \in K^{2}$.

\begin{lemma}\label{locus}
    Consider the map $\Pi \colon K^{2} \to [-2,2]^{2}$ defined by $\Pi(a,b) = (\mathrm{tr}(a), \mathrm{tr}([a,b]))$. Then, the image is given by 
    \[
    \Pi(K^{2}) = \{(x,t) \in [-2,2]^{2} \colon x^{2}-2 \le t \},
    \]
    which will be denoted by $D$. Further, the pushforward of the Haar measure on $K^{2}$ is equivalent to the Lebesgue measure on $D$.
\end{lemma}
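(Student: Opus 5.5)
Since $\Pi$ is invariant under simultaneous conjugation, I will use explicit coordinates. Conjugate $a$ to the diagonal form $a=\mathrm{diag}(e^{i\theta},e^{-i\theta})$ with $x=2\cos\theta$, $\theta\in[0,\pi]$. Write $b=\begin{pmatrix}\alpha & -\bar\beta\\ \beta & \bar\alpha\end{pmatrix}$ with $|\alpha|^2+|\beta|^2=1$. A direct computation of $aba^{-1}b^{-1}$ then gives
\[
\mathrm{tr}([a,b]) = 2|\alpha|^{2} + 2|\beta|^{2}\cos 2\theta = 2 - 4|\beta|^{2}\sin^{2}\theta .
\]
This is the only computation needed; I will check it from the $(1,1)$ and $(2,2)$ entries.

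For fixed $\theta$, as $|\beta|^2$ ranges over $[0,1]$, the value $t$ ranges over $[2-4\sin^2\theta,\,2]=[2\cos2\theta,2]=[x^2-2,2]$, using $2\cos 2\theta=4\cos^2\theta-2=x^2-2$. Since $x=2\cos\theta$ covers $[-2,2]$, the image $\Pi(K^2)$ is exactly $D$. I will also note that $x^2-2\ge -2$, so $D\subset[-2,2]^2$ automatically.

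For the measure statement I will use the Weyl integration formula together with the independence of $a$ and $b$. Under Haar measure, $\theta$ has density proportional to $\sin^2\theta$ on $[0,\pi]$, so $x=2\cos\theta$ has density proportional to $\sqrt{4-x^2}$ on $[-2,2]$. This density is positive on the open interval.

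Conditionally on $a$, the element $g^{-1}bg$ is again Haar distributed for any fixed $g$, so I may diagonalize $a$ without changing the law of $b$. Identifying $K$ with $S^3$, the quantity $s=|\beta|^2$ is uniform on $[0,1]$ by Archimedes' theorem for $S^3$: the coordinates $(|\alpha|^2,|\beta|^2)$ of a uniform point on $S^3\subset\mathbb{C}^2$ are uniform on the simplex.

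Hence $(x,s)$ has density $c\sqrt{4-x^2}$ on $[-2,2]\times[0,1]$. The map $(x,s)\mapsto(x,t)=(x,\,2-s(4-x^2))$ is a diffeomorphism from $(-2,2)\times(0,1)$ onto the interior of $D$, since $\sin^2\theta=(4-x^2)/4$. Its Jacobian is $|\partial t/\partial s|=4-x^2$, which is nonzero there. So the pushforward has density
\[
c\,\frac{\sqrt{4-x^2}}{4-x^2}=\frac{c}{\sqrt{4-x^2}}
\]
on the interior of $D$. This density is strictly positive and finite almost everywhere, and the boundary of $D$ is Lebesgue-null. The pushforward therefore has the same null sets as Lebesgue measure on $D$, i.e. the two measures are equivalent.

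The main obstacle is justifying the reduction to diagonal $a$ at the level of measures, not just images. I will handle it through the invariance of the joint law of $(a,b)$ under $(a,b)\mapsto(gag^{-1},gbg^{-1})$, choosing $g$ measurably in $a$, together with the independence of $a$ and $b$.
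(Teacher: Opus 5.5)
Your proof is correct, but it takes a different route from the paper in both parts.

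\textbf{The image.} The paper obtains $\Pi(K^2)\subset D$ from the Fricke--Vogt identity $t=x^2+y^2+z^2-xyz-2$, together with the classical description of the image of the trace map $(a,b)\mapsto(\mathrm{tr}\,a,\mathrm{tr}\,b,\mathrm{tr}\,ab)$. It then proves surjectivity with a real rotation $b$. Your identity $t=2-4|\beta|^2\sin^2\theta$ holds for \emph{every} $b$ once $a$ is diagonal, so it gives both inclusions at once with no outside input. The paper's construction is exactly your formula restricted to real $\alpha,\beta$.

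\textbf{The measure.} The paper argues softly. Its explicit construction is a smooth section $F$ of $\Pi$ over the interior $U$ of $D$, so $\Pi$ is a submersion near $F(U)$ and the pushforward has a positive density there. You instead compute the pushforward exactly, using the Weyl density $\frac{2}{\pi}\sin^2\theta$, Archimedes' theorem, and the change of variables $(x,s)\mapsto(x,2-s(4-x^2))$. This gives the density $\frac{1}{2\pi\sqrt{4-x^2}}$ on $U$ for normalized Haar measure.

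What your version buys is that both directions of absolute continuity come for free, as does the fact that $\partial D$ carries no mass. The submersion argument as written only shows that Lebesgue measure on $U$ is absolutely continuous with respect to the pushforward. The converse needs the extra remark that the critical set of $\Pi$ is Haar-null. This is true, for instance because $\Pi$ is real-analytic on the connected manifold $K^2$ and is a submersion somewhere. The paper's route, in turn, needs only a smooth local section rather than an exact density computation, so it would adapt to maps whose pushforward density is not explicitly computable.

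Finally, you do not need a measurable choice of the diagonalizing $g$. For each fixed $a$, conjugation invariance of Haar measure shows that $\Pi(a,b)$ with $b$ Haar has the same law as $\Pi(\mathrm{diag}(e^{i\theta},e^{-i\theta}),b)$. Fubini then integrates over $a$.
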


\begin{proof}
For $(a,b)\in K^{2}$, let $x=\mathrm{tr}(a)$, $y=\mathrm{tr}(b)$, and $z=\mathrm{tr}(ab)$. It is a classical fact that the trace map $(a,b)\mapsto(x,y,z)$ from $K^{2}$ is surjective onto
\[
\Omega=\{(x,y,z)\in[-2,2]^3 : x^{2}+y^{2}+z^{2}-xyz-4\le0 \}.
\]
The trace of the commutator $t=\mathrm{tr}([a,b])$ satisfies the Fricke--Vogt trace identity
\[
t=x^{2}+y^{2}+z^{2}-xyz-2.
\]

Since $x^{2}+y^{2}+z^{2}-xyz\le4$ on $\Omega$, we obtain $t\le2$. On the other hand, completing the square in $y$ gives
\[
t=x^{2}-2+\left(y-\frac{xz}{2}\right)^{2}+z^{2}\left(1-\frac{x^{2}}{4}\right).
\]
As $x\in[-2,2]$, the last two terms are non-negative, hence $t\ge x^{2}-2$.

Next we show that $\Pi(K^2)=D$. Let $(x,t) \in D$. First consider the case $|x|<2$. Choose $\alpha\in[0,\pi]$ such that
\[
x=2\cos\alpha .
\]
Set
\[
s=\frac{2-t}{4-x^2}.
\]
Since $x^2-2\le t\le 2$, we have $0\le 2-t\le 4-x^2$, so $0\le s\le1$.

Define
\[
a=
\begin{pmatrix}
e^{i\alpha} & 0\\
0 & e^{-i\alpha}
\end{pmatrix},
\qquad
b=
\begin{pmatrix}
\sqrt{1-s} & -\sqrt{s}\\
\sqrt{s} & \sqrt{1-s}
\end{pmatrix}.
\]
Clearly $a,b\in \mathrm{SU}(2)$ and
\[
\operatorname{tr}(a)=e^{i\alpha}+e^{-i\alpha}=2\cos\alpha=x.
\]

Write $c=\sqrt{1-s}$ and $d=\sqrt{s}$. Then
\[
b^{-1}=
\begin{pmatrix}
c & d\\
-d & c
\end{pmatrix}.
\]
A direct computation gives
\[
\operatorname{tr}([a,b])
=2c^2+2d^2\cos(2\alpha).
\]
Using $c^2=1-s$, $d^2=s$, and $\cos(2\alpha)=\frac{x^2}{2}-1$, we obtain
\[
\operatorname{tr}([a,b])
=2(1-s)+2s\cos(2\alpha)
=2-2s(1-\cos2\alpha)
=2-4s\sin^2\alpha.
\]
Since $4\sin^2\alpha=4-x^2$, this becomes
\[
\operatorname{tr}([a,b])=2-s(4-x^2)=2-(2-t)=t.
\]
Finally consider the case $|x|=2$. Then the condition $x^2-2\le t$ forces $t=2$.  
Take $a=\pm I$ with $\operatorname{tr}(a)=x$ and $b=I$. Then $[a,b]=I$ and
\[
\operatorname{tr}([a,b])=2=t.
\]
Thus $\Pi(K^2)=D$.\\
\indent Finally, we verify the claim on the pushforward measure. Let 
\[
U = \{(x,t) \in D \colon |x| < 2, \; x^2-2 < t < 2 \}
\]
be the interior of $D$. The explicit construction above provides a smooth map $F \colon U \to K^2$ satisfying $\Pi(F(x,t)) = (x,t)$. By the chain rule, the differential of $\Pi$ is surjective at every point in $F(U)$. This implies that $\Pi$ is a submersion on an open neighborhood of $F(U)$ in $K^2$, which maps onto $U$. Consequently, the pushforward of the Haar measure admits a strictly positive density on $U$. Since the boundary $D \setminus U$ has Lebesgue measure zero, the pushforward of the Haar measure is equivalent to the Lebesgue measure on $D$.
\end{proof}

Then, by Lemma \ref{trcomm}, the map $\Phi \colon K^{2} \to K^{2}$ induces the map $\phi \colon D \to D$ given by $\phi(x,t) = (x^{2}-2, x^{2}(t-2)+2)$, and it allows us to reduce the main theorem to the following claim. 

\begin{lemma}\label{planezeroone}
 Let $p\colon D \to [-2,2]$ denote the projection $(x,t) \mapsto t$. Suppose that a Borel subset $E \subset D$ satisfies the following:
 \begin{itemize}
     \item $E$ is stable under $\phi$, i.e., $\phi(E) \subset E$ modulo null sets.
     \item For almost every $t \in p(E)$, the fiber $p^{-1}(t)$ is either contained in $E$ or in the complement of $E$ modulo null sets.
 \end{itemize}
 Then, $E$ is either null or conull in $D$.
\end{lemma}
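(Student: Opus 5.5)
The plan is to reduce everything to a one-dimensional statement about the set of $t$-values and then exploit the explicit formula for $\phi$. By the second hypothesis, up to a null set $E$ is a union of full fibers. That is, there is a Borel set $T\subset[-2,2]$ with
\[
E = D\cap p^{-1}(T)
\]
modulo null sets. Here I use Fubini and Lemma \ref{locus}: the fiber $p^{-1}(t)\cap D=\{x: x^{2}\le t+2\}$ has positive length for every $t>-2$. It therefore suffices to show that $T$ is null or conull in $[-2,2]$. Assume $T$ is not null; we want $T$ to be conull.

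\textbf{Step 1: the first hypothesis as a statement about $T$.} We have $\phi(x,t)=(x^2-2,\,2+x^2(t-2))$. The Jacobian determinant is $2x^{3}$, which is nonzero off the null set $\{x=0\}$. Hence $\phi$ is locally a diffeomorphism there, and it maps positive-measure sets to positive-measure sets and null sets to null sets. Combining this with $\phi(E)\subset E$ mod null and Fubini, I would deduce the following. For almost every $t\in T$ and almost every admissible $x$ (i.e.\ $x^2\le t+2$), the value $2+x^2(t-2)$ lies in $T$.

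For fixed $t$, as $x^2$ ranges over $[0,t+2]$, the quantity $2+x^2(t-2)$ sweeps the interval $[t^{2}-2,\,2]$. The map $x\mapsto 2+x^2(t-2)$ is smooth with nonvanishing derivative for $x\neq 0$ and $t<2$, so null sets pull back to null sets. Hence
\[
[t^{2}-2,2]\subset T \quad\text{mod null, for a.e. } t\in T .
\]
This measure-theoretic bookkeeping, in particular making sure that ``a.e.\ $t$'' and ``a.e.\ $x$'' interact correctly through the change of variables, is the step I expect to require the most care. The geometry afterwards is elementary.

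\textbf{Step 2: iteration.} Since $T$ has positive measure, pick a good $t_0\in T$ with $t_0<2$. Then $[u_1,2]\subset T$ mod null, where $u_1=t_0^2-2$. Inductively, suppose $[u_k,2]\subset T$ mod null with $u_k\in(-1,2)$. Almost every point of $[u_k,2]$ is a good point of $T$, so we may choose a good $t$ slightly above $u_k$. This gives $[t^2-2,2]\subset T$. Since $u^2-2<u$ on $(-1,2)$, we may take $u_{k+1}$ as close as we like to $u_k^2-2$.

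The iteration $u\mapsto u^2-2$ on $(-1,2)$ decreases toward the fixed point $-1$. So after finitely many steps some $u_k<0$; if $u_1\le -1$ already, this holds at the first step. Then $0\in(u_k,2)$, and we pick a good $t\in T$ with $|t|$ small. This yields $[t^2-2,2]\subset T$ with $t^2-2$ arbitrarily close to $-2$.

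Letting $t\to0$ shows $T$ is conull in $[-2,2]$. Consequently $E$ is conull in $D$, which completes the argument.
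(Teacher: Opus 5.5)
Your proof is correct. Its overall architecture matches the paper's: reduce to the set of $t$-values whose fibers lie in $E$, show that for almost every such $t$ the interval $[t^2-2,2]$ is essentially contained in that set, and iterate $u\mapsto u^2-2$ until the left endpoint drops below $0$.

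Where you differ is in the proof of the key one-step claim.
\begin{itemize}
\item The paper works with the essential projection $I_E$ and takes a Lebesgue density point $t$ of $I_E$. It compares the $\mathrm{Leb}_2$-density of $E$ in the strip $V_\varepsilon=p^{-1}((t-\varepsilon,t+\varepsilon))$ with that of $\phi(E\cap V_\varepsilon)$ in the wedge $\phi(V_\varepsilon)$. It then passes to one-dimensional densities on $p(\phi(V_\varepsilon))$ by Fubini.
\item You first replace $E$ by the saturated set $D\cap p^{-1}(T)$ and then use only qualitative facts. $C^1$ maps send null sets to null sets. Since $\phi$ is a local diffeomorphism off $\{x=0\}$, it pulls null sets back to null sets, so almost every point of $D\cap p^{-1}(T)$ is mapped by $\phi$ back into that set. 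Fubini and a one-variable change of variables then give the claim.
\end{itemize}
Your route is shorter and avoids bookkeeping that the density argument needs but leaves implicit. The map $\phi$ is two-to-one with Jacobian $2x^3$ degenerating at $x=0$. The fibers of $\phi(V_\varepsilon)$ shrink near $r=2$. So converting the two-dimensional density of $\phi(E\cap V_\varepsilon)$ into a one-dimensional density takes some care. The paper's version, in exchange, stays closer to the geometric picture of thin strips being mapped to thin wedges.

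Two small corrections, neither of which is a gap.
\begin{itemize}
\item In Step 1 you need that $[t^2-2,2]\setminus T$ is null, given that its preimage under $f_t(x)=2+x^2(t-2)$ is null. That is the images-of-null-sets direction. It holds because $f_t$ is $C^1$; indeed $f_t$ is a diffeomorphism of $(0,\sqrt{t+2})$ onto $(t^2-2,2)$. The pullback direction you cite is not the one being used.
\item In Step 2, $u\mapsto u^2-2$ does not decrease toward $-1$. The fixed point $-1$ is repelling: writing $u=2\cos\theta$, the map doubles $\theta$, and orbits in $(-1,2)$ can jump below $-1$ (e.g.\ $0\mapsto -2$). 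What you actually need is that $u-(u^2-2)=(2-u)(1+u)$ is bounded below by a positive constant on $[0,u_1]$. You can then choose $u_{k+1}\le u_k-\tfrac12(2-u_k)(1+u_k)$, which forces $u_k<0$ after finitely many steps.
\end{itemize}
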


Before proving this result, we see that this claim actually implies Theorem \ref{mainthm}.

\begin{proof}[Proof of Theorem \ref{mainthm}]
     By Theorem \ref{Fisher2} and Lemma \ref{stable}, the set $\Pi(K^{2}\setminus \mathbf{SG}_{2})$ satisfies the conditions in Lemma \ref{planezeroone}. Then, by Lemma \ref{planezeroone}, the set $\Pi(K^{2}\setminus \mathbf{SG}_{2})$ is either null or conull. As indicated in Lemma \ref{locus}, this implies that $K^{2}\setminus \mathbf{SG}_{2}$ is also either null or conull.
\end{proof}

It therefore remains to prove Lemma \ref{planezeroone}.

\begin{proof}[Proof of Lemma \ref{planezeroone}]
Before providing rigorous arguments, let us sketch the argument by ignoring ``modulo null sets" in the assumptions for a moment. We want to show that if $E$ has positive measure, it must have full measure. For a fixed $t \in [-2, 2]$, the fiber $p^{-1}(t)$ is the horizontal segment $\{(x,t) : x^{2} \le t+2\}$. The map $\phi$ sends this fiber to a segment of the line with slope $t-2$ passing through $(-2, 2)$. Specifically, as $x^2$ ranges from $0$ to $t+2$, the image $\phi(x, t)$ traces a line segment from $(-2, 2)$ to $(t, t^2-2)$. The projection of this segment onto the $t$-axis is exactly the interval $[t^{2}-2, 2]$. Thus, if $E$ has positive measure (and hence there exists $t \in p(E)\setminus \{2\}$), the stability $\phi(E) \subset E$ forces $E$ to contain all fibers over the interval $[t^{2}-2, 2]$. By iterating this, the left endpoint of the interval of fibers follows the sequence $t_{n+1} = t_n^{2}-2$, which eventually falls below $0$. Once $E$ contains the fiber over $0$, its image projects onto the entire range $[-2, 2]$, meaning $E$ contains almost all fibers in $D$. \\
\indent It remains to show that the above argument is valid when null sets are taken into account. Let $I_{E}$ denote the essential projection of $E$, defined by 
\[
I_{E} = \{ t \in [-2,2] \, \colon p^{-1}(t) \cap E \, \,\text{has positive measure in} \, \, p^{-1}(t) \}.
\]
We claim that for almost every $t \in I_{E}$, the interval $[t^{2}-2, 2]$ is essentially contained in $I_{E}$. Once this is established, the statement of the lemma follows immediately by the iteration argument as above. \\
\indent To prove the claim, we apply the Lebesgue density theorem. Let $t \in I_{E}$ be a Lebesgue density point of $I_{E}$. We want to show that $I_{E}$ has full measure in $[t^{2}-2, 2]$. For $i \in \{1,2\}$, let $\mathrm{Leb}_{i}$ denote the $i$-dimensional Lebesgue measure.
Let $V_\varepsilon := p^{-1}((t-\varepsilon,t+\varepsilon))$. Since $t$ is a density point of $I_{E}$, the second assumption implies that 
\[
\frac{\mathrm{Leb}_2(E\cap V_\varepsilon)}
{\mathrm{Leb}_2(V_\varepsilon)} \to 1
\quad \text{as } \varepsilon \to 0.
\]
Since $\phi$ is a local diffeomorphism away from the negligible set $\{x=0\}$, it follows that
\[
\frac{\mathrm{Leb}_2(\phi(E\cap V_\varepsilon))}
{\mathrm{Leb}_2(\phi(V_\varepsilon))}\to1 .
\]
By Fubini's theorem, we deduce that for sufficiently small $\varepsilon$, the set of 
$r \in p(\phi(V_\varepsilon))$ for which the fiber $p^{-1}(r)$ meets $\phi(E\cap V_\varepsilon)$ in positive measure occupies almost all of $p(\phi(V_\varepsilon))$. By the second assumption, such fibers must be contained in $E$ modulo null sets. 
Hence $I_{E}$ essentially exhausts $p(\phi(V_\varepsilon))$, i.e.,
\[
\frac{\mathrm{Leb}_1(I_{E} \cap p(\phi(V_\varepsilon)))}{\mathrm{Leb}_1(p(\phi(V_\varepsilon)))} \to 1
\]
as $\varepsilon \to 0$. Since $[t^2-2,2] \subset p(\phi(V_\varepsilon))$ and
\[
\mathrm{Leb}_1\big(p(\phi(V_\varepsilon))\setminus [t^2-2,2]\big)
\to 0,
\]
it follows that
\[
\mathrm{Leb}_1([t^2-2,2]\setminus I_{E})=0,
\]
which completes the proof.
\end{proof}

\section{Baire categorical version}
The topological version (Theorem \ref{baire}) also follows from Lemma \ref{stable}.

\begin{proof}[Proof of Theorem \ref{baire}]
    First, suppose that $\mathbf{SG}_{2}$ contains every pair generating a dense subgroup of $K$. Then, since the set of pairs generating dense subgroups is comeager in $K^{2}$, it follows that $\mathbf{SG}_{2}$ is comeager.\\
    \indent Second, suppose that there exists $(a,b) \notin \mathbf{SG}_{2}$ generating a dense subgroup of $K$. We will show that $\mathbf{SG}_{2}$ is meager in this case, by writing $K^{2}\setminus\mathbf{SG}_{2}$ as a countable intersection of dense open sets. By the definition of the spectral gap property, we have
    \[
    K^{2}\setminus \mathbf{SG}_{2} = \bigcap_{n\in \N} \left\{(a,b) \in K^{2} \colon \exists v \in L_{0}^{2}(K), \left\lVert \lambda(a)v-v\right\rVert + \left\lVert \lambda(b)v-v  \right\rVert < \frac{\left\lVert v\right\rVert}{n} \right\}. 
    \]
    Let $(X_{n})_{n \in \N}$ denote the sequence of subsets appearing in the right hand side. Since the left regular representation is strongly continuous, it follows that each $X_{n}$ is open. Further, Lemma \ref{stable} implies that $K^{2}\setminus \mathbf{SG}_{2}$ contains the orbit of $(a, b)$ under the action of word maps. Since $(a, b)$ generates a dense subgroup, its orbit is dense in $K^2$, and hence $X_{n}$ is also dense. This completes the proof.
\end{proof}

\begin{remark}\label{rem:andras}
    The Baire version also holds for any $n$-tuples ($n \ge 2$). Clearly, if $\mathbf{SG}_2$ is comeager, then $\mathbf{SG}_n$ ($n \ge 3$) is comeager since adding generators preserves the spectral gap. On the other hand, as pointed out by Andr\'as M\'ath\'e, if $\mathbf{SG}_2$ is meager then $\mathbf{SG}_n$ is meager for every $n \ge 3$. Indeed, by the proof of Theorem \ref{baire}, there exists a pair $(a, b) \notin \mathbf{SG}_2$ generating a dense subgroup $\Gamma \subset \mathrm{SU}(2)$. For any $n \ge 2$, the product $\Gamma^n$ is dense in $\mathrm{SU}(2)^n$ and every $n$-tuple in $\Gamma^n$ generates a subgroup lacking a spectral gap. Therefore, the set of $n$-tuples without a spectral gap contains a dense subset of $\mathrm{SU}(2)^n$. Since $\mathrm{SU}(2)^n \setminus \mathbf{SG}_n$ is also a $G_\delta$ set, it is comeager, which implies $\mathbf{SG}_n$ is meager for all $n \ge 2$.
\end{remark}

\bibliography{SGC}
\bibliographystyle{alpha}
\end{document}